\newtheorem{theorem}{Theorem}[section]
\newtheorem{prop}{Proposition}[section]
\newcommand{\F}{\mathbb{F}}
\newcommand{\Z}{\mathbb{Z}}
\newcommand{\la}{\lambda}
\def\Sym{{\rm Sym}}
\numberwithin{equation}{section}
\newcommand{\beq}[1]{\begin{equation}\label{#1}}
\newcommand{\eeq}{\end{equation}}
\title[Packing Sets]{Packing Sets} 
\author[O. Roche-Newton, I. D. Shkredov and A. Winterhof]{Oliver Roche-Newton, Ilya D. Shkredov and Arne Winterhof}
\address{O. Roche-Newton: Institute for Financial Mathematics and Applied Number Theory, Johannes Kepler Universit\"{a}t, Altenberger Str.\ 69,  Linz, Austria}
\email{o.rochenewton@gmail.com }
\address{I. D. Shkredov: Division of Number Theory, Steklov Mathematical Institute, ul.\ Gubkina 8, Moscow, 119991, Russia
and IITP, Bolshoy Karet\-ny Per. 19, Moscow, 127994,
and MIPT, Institutskii per. 9, Dolgoprudnii, 141701}
\email{ilya.shkredov@gmail.com}
\address{A. Winterhof: Johann Radon Institute for Computational and Applied Mathematics, Austrian Academy of Sciences, Altenberger Str.\ 69, Linz, Austria}
\email{arne.winterhof@oeaw.ac.at}
\subjclass[2000]{11B30 11N69 (11A07 11N25 11T71 94B05)}
\keywords{packing sets, finite fields, product sets, limited-magnitude error correcting codes}
\begin{document}

\begin{abstract} 
For a given subset $A\subseteq G$ of a finite abelian group $(G,\circ)$, we study the problem of finding a large packing set $B$ for $A$, that is, a set $B \subseteq G$ such that $|A\circ B|=|A||B|$. Rusza's covering lemma and the trivial bound imply the existence of such a $B$
of size $|G|/|A|^2\le |G|/|A\circ A^{-1}|\le |B|\le |G|/|A|$. We show that these bounds are in general optimal and essentially any $\nu(A)$ in the interval $[|G|/|A|^2,|G|/|A|]$ can appear for some $|A|$.

The case that $G$ is the multiplicative group of the finite field $\F_p$ of prime order $p$ and $A=\{1,2,\ldots,\lambda\}$ for some positive integer $\lambda$ is particularly interesting in view of the construction of 
limited-magnitude error correcting codes. Here we construct a packing set $B$
of size $|B|\gg p (\lambda \log p)^{-1}$ for any $\lambda \le 0.9 p^{1/2}$. This result is optimal up to the logarithmic factor. 
\end{abstract} 
\maketitle

\section{Introduction}

Given two subsets $A$ and $B$ of a finite abelian group $(G,\circ)$ with unit $1$, the {\em product set} of $A$ and $B$ is defined as
$$A\circ B:=\{a\circ b:a \in A, b \in B \}.$$
We consider the cardinality of this product set, especially those sets for which the product set is of maximal size. 
A simple observation is that the trivial bound 
$$|A\circ B|\leq \min \{ |A||B|, |G| \}$$ 
holds for any $A,B \subseteq G$.

In this paper, we seek to answer the following question: given $\emptyset \not= A \subseteq G$, what is the size of the largest set $B \subseteq  G$ such that $|A\circ B|=|A||B|$?
We call any $B$ with $|A\circ B|=|A||B|$ an {\em $A$-packing set} and denote by $\nu(A)$ the maximal size of an $A$-packing set:
$$\nu(A):=\max\{|B|: B\subseteq G, |A\circ B|=|A||B|\}.$$

Suppose that we have such a set $B$. Since $|A||B|=|A\circ B|\leq |G|$, it must be the case that $|B| \leq |G|/|A|$ and thus
$$\nu(A)\le \left\lfloor \frac{|G|}{|A|}\right\rfloor.$$ 
For some interesting sets $A$, it can be easily established that $ \nu(A) $ is close to $|G|/|A|$. 
For example, if $A\subseteq G$ is a subgroup with distinct cosets $x_1\circ A,x_2\circ A,\dots,x_k\circ A$ where $k=|G|/|A|$, 
we can take $B=\{x_1,x_2,\dots,x_k\}$ and then $|A\circ B|=|A||B|=|G|$. Thus $\nu(A)=|B|=|G|/|A|$.
Conversely, if $A=\{x_1,x_2,\dots,x_k\}$ with elements in different cosets of a subgroup $B$ of order $|G|/k$, $B$ is an $A$-packing set.

The case that $G$ is the multiplicative group $\F_p^*$ of the finite field $\F_p$ of $p$ elements is particularly interesting in view of applications. More precisely, if $p$ is prime and $A=\{1,2,\ldots,\lambda\}$ for some positive integer $\lambda$, the authors in \cite{klboel,kllunaya,klluya} used an 
$A$-packing set $B$
to construct codes that correct single limited-magnitude errors. For more details see also \cite[Section 6.2.2]{niwi}. We denote
$$\nu(\lambda)=\nu(\{1,2,\ldots,\lambda\}).$$

Rusza's Covering Lemma, see \cite[Lemma 2.14]{tawu}, guarantees for any $A\subseteq G$ the existence 
of $B\subseteq G$ with $|A\circ B|=|A||B|$ and $G\subseteq A\circ A^{-1}\circ B$ and we get 
immediately  
\begin{equation}\label{bound1} \nu(A)\ge \left\lceil \frac{|G|}{|A\circ A^{-1}|}\right\rceil
\end{equation}
and since $|A\circ A^{-1}|\le |A|^2$
\begin{equation}\label{bound2} \nu(A)\ge \left\lceil\frac{|G|}{|A|^2}\right\rceil.
\end{equation}
For the convenience of the reader we will give a very short proof of $(\ref{bound1})$ in Section~\ref{sec2}.

In the above result, $A\circ A^{-1}$ denotes the set $\{a\circ b^{-1}:a,b \in A\}$, which we call the \textit{ratio set of $A$}. Note that the bound \eqref{bound1} is tight, up to multiplicative 
constants\footnote{Here and throughout the paper, the notation $X \ll Y$ and $Y \gg X$ indicates that there exists an absolute constant $c>0$ such that $X \leq cY$. 
If both $X \ll Y$ and $Y \ll X$, we write $X \approx Y$.}, in the case when the ratio set satisfies the bound $|A\circ A^{-1}| \ll |A|$. This generalises the result given by the simple construction above when $A$ is a multiplicative subgroup to the broader class of sets with small ratio set.

In fact, the weaker bound \eqref{bound2} is also optimal up to multiplicative constants in general, as the construction described in Section~\ref{sec2} shows. This construction can be modified to see that essentially any integer value $\nu(A)$ in the interval $[|G|/|A|^2,|G|/|A|]$ can be attained.


Section~\ref{sec4} deals with the special case when $G=\F_p^*$ with a prime $p$ and $A=\{1,2,\ldots,\lambda\}$. In this case we use the standard notation $AB$ for the product set, rather than $A \circ B$ as above. Since $|AA^{-1}|\gg \min\{\lambda^2,p\}$,
$(\ref{bound1})$ is only of limited power in this case. However, we give a simple construction which proves that\footnote{We denote by $\log$ the natural logarithm.} 
$$\nu(\lambda)\gg \frac{p}{\lambda \log p}$$
under the condition that $\lambda \leq 0.9p^{1/2}$.


Section~\ref{sec:symm} contains a result on the group of symmetries $\Sym(B)=\{x\in G : x\circ B=B\}$ of any  $A$-packing set $B$ of maximal size.

 Finally in Section \ref{sec:cov}, we briefly discuss the related problem of finding a small {\em $A$-covering set} $B$, that is, a set $B \subseteq  G$ such that $A \circ B=G$. 

\section{Proof of $(\ref{bound1})$ and proof of the optimality of $(\ref{bound2})$}\label{sec2}

\begin{proof}[Proof of $(\ref{bound1})$]

Let $B\subseteq G$ be any set with $\nu(A) =
|B|$.
Then, by the maximality of $B$, for each $x\in G$ we have $(A\circ x) \cap (A\circ B)\neq
\emptyset$, that is, $G \subseteq
A^{-1}\circ A\circ B$ and hence $|G| \le |A^{-1}\circ A\circ B| \le |A\circ A^{-1}| |B|$.
Thus $|B| \ge |G|/|A\circ A^{-1}|$.
\end{proof}

The following construction shows that $(\ref{bound2})$ is (up to a multiplicative constant) optimal.

Let $H=\{g,g^2,\dots,g^k\} \subseteq G$ be any cyclic subgroup of $G$ with $|H|=k\ge 2$. Let $d=\lceil \sqrt{k} \rceil\ge 2$ and define
$$A_1=\{g,g^2,\dots,g^d\},\,\,\,\,\,\,\,\, A_2=\{g^d,g^{2d},\dots,q^{(d-1)d},g^{d^2}\}.$$
Define $A=A_1 \cup A_2$. Note that $|A|<2d$ and that $A\circ A^{-1}=H$.

Now suppose that $|A\circ B|=|A||B|$ for some $B \subseteq G$. This is true if and only if there are no non-trivial solutions to the equation
$$a_1\circ b_1=a_2\circ b_2 ,\,\,\,\,\,\,\, (a_1,a_2,b_1,b_2) \in A \times A \times B\times B,$$
which happens if and only if
$$(A\circ A^{-1}) \cap (B\circ B^{-1} )= \{1\}.$$

We want to show that $B$ cannot be too large. Since $A\circ A^{-1}=H$, it must be the case that $(B\circ B^{-1} ) \cap H = \{1\}$. But then $B$ cannot contain more than one element from each coset of $H$. Indeed, if $b_1,b_2 \in B$ with $b_1=x\circ h_1$ and $b_2=x\circ h_2$ and with $h_1,h_2 \in H$ distinct, it follows that
$$b_1\circ b_2^{-1}=h_1\circ h_2^{-1} \in H \setminus \{1\} = A\circ A^{-1} \setminus \{1\}.$$
Therefore
$$|B| \leq \frac{|G|}{k} < \frac{|G|}{(d-1)^2} \leq \frac{16|G|}{|A|^2}.$$
This shows that $\nu(A) \ll |G|/|A|^2$. Furthermore, one can modify this construction by adding more elements from $H$ to the set $A$ in order to obtain, for any $0\leq \alpha \leq 1$, a set $A'$ with $|A'\circ A'^{-1}|\approx |A'|^{1+\alpha}$ and with $\nu(A') \ll |G|/|A'\circ A'^{-1}|$. This gives a broader class of sets for which the bound \eqref{bound1} is tight up to multiplicative constants.

\section{The case when $G=\F_p^*$ and $A=\{1,2,\dots, \lambda\}$}
\label{sec4}

In this Section, we consider the case of the multiplicative group $\F_p^*$ of a finite prime field and fix $A$ to be the interval $A=\{1,2, \dots , \lambda \} \subseteq \mathbb F_p^*$. Recalling the notation from the introduction, we seek lower bounds for $\nu(\lambda)$.
Inequality $(\ref{bound1})$ does not immediately give a strong result because of the following proposition.
\begin{prop}\label{prop} For $A=\{1,2,\ldots,\lambda\}\subseteq \F_p^*$ we have
$|AA^{-1}| \gg \min \{\lambda^2,p\}$.
\end{prop}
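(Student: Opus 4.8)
The plan is to bound the ratio set $AA^{-1}=\{a/b \bmod p : 1\le a,b\le \lambda\}$ from below by counting, for each fixed nonzero residue $t\in\F_p^*$, how many pairs $(a,b)\in A\times A$ can satisfy $a\equiv tb\pmod p$. If we write $r(t)$ for this number of representations, then $\sum_{t\in\F_p^*} r(t)=|A|^2=\lambda^2$, while $\sum_{t} r(t)\le |AA^{-1}|\cdot\max_t r(t)$, so it suffices to show $\max_{t\neq 0} r(t)\ll 1+\lambda^2/p$; this immediately yields $|AA^{-1}|\gg\lambda^2/(1+\lambda^2/p)\approx\min\{\lambda^2,p\}$.

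So the heart of the matter is the estimate $r(t)\ll 1+\lambda/1+\lambda^2/p$ — more precisely, I want to show that for any fixed $t\in\F_p^*$ the congruence $a\equiv tb\pmod p$ has $O(1+\lambda^2/p)$ solutions with $a,b\in\{1,\dots,\lambda\}$. First I would lift this to $\Z$: a solution corresponds to an integer pair $(a,b)\in[1,\lambda]^2$ with $a=tb-kp$ for some integer $k$, where here $t$ is taken as an integer in $[1,p-1]$. This means $(b,a)$ lies on the line $a=tb-kp$, and for different values of $k$ we get parallel lines. The key point is a standard lattice/continued-fraction observation: the solutions $(a,b)$ to $a\equiv tb\pmod p$ form a rank-$2$ sublattice $\Lambda_t$ of $\Z^2$ of covolume $p$, and the number of lattice points of such a lattice inside a $\lambda\times\lambda$ square is $O(1+\lambda^2/p + \lambda/\ell_1)$, where $\ell_1$ is the length of the shortest vector of $\Lambda_t$. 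When $\lambda\le 0.9 p^{1/2}$ (the regime of interest, though the proposition is stated for all $\lambda$) one can argue more crudely.

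For the cleanest self-contained argument I would avoid invoking lattice-point-counting theorems and instead use the following elementary dichotomy. Fix $t$ and suppose $(a_1,b_1)$ and $(a_2,b_2)$ are two distinct solutions of $a\equiv tb\pmod p$ in $[1,\lambda]^2$; then $a_1-a_2\equiv t(b_1-b_2)\pmod p$ with $|a_i-a_j|,|b_i-b_j|<\lambda$. If $\lambda^2<p$ this forces $b_1=b_2$ and hence $a_1=a_2$, so $r(t)\le 1$ and $|AA^{-1}|\ge \lambda^2$, matching $\min\{\lambda^2,p\}$ up to the constant. If $\lambda^2\ge p$, the trivial bound gives $|AA^{-1}|\ge |A|=\lambda\ge p^{1/2}$, which is not yet $\gg p$; to upgrade it I would instead note $|AA^{-1}|\ge |(A-A)\cdot$ something$|$ — better, just observe that $A\supseteq\{1,\dots,\lceil p^{1/2}\rceil\}=:A_0$, apply the first case to $A_0$ to get $|A_0A_0^{-1}|\gg p$, and conclude $|AA^{-1}|\ge |A_0A_0^{-1}|\gg p$. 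Combining the two cases gives $|AA^{-1}|\gg\min\{\lambda^2,p\}$.

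The step I expect to be the main obstacle — or at least the one requiring the most care — is making the case $\lambda^2\ge p$ genuinely give the lower bound $\gg p$ rather than merely $\gg p^{1/2}$; the trick above (restrict to the sub-interval of length $\approx p^{1/2}$ where ratios are automatically distinct, then use monotonicity of the ratio set under inclusion $A_0\subseteq A$) resolves this cleanly, so in fact the whole proof reduces to the elementary pigeonhole observation that $a_1-a_2\equiv t(b_1-b_2)\pmod p$ with all differences of absolute value less than $p^{1/2}$ forces both differences to vanish. No deep input is needed; the only subtlety is being careful that $t\neq 0$ so that $b_1=b_2\Rightarrow a_1=a_2$ and vice versa.
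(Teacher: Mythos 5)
Your overall structure (split according to whether $\lambda^2<p$, lift the congruence to an equation over $\Z$, and for $\lambda\ge p^{1/2}$ pass to the subinterval of length about $\sqrt p$ and use monotonicity of the ratio set) mirrors the paper's proof, but the key counting claim in the case $\lambda^2<p$ is false. You assert that for a fixed $t\in\F_p^*$ two distinct solutions $(a_1,b_1),(a_2,b_2)\in[1,\lambda]^2$ of $a\equiv tb\pmod p$ cannot coexist, i.e.\ that $r(t)\le 1$ and hence $|AA^{-1}|\ge\lambda^2$. This is not so: for $t=1$ every pair $(a,a)$ is a solution, so $r(1)=\lambda$; for $t=2$ one has $(2,1),(4,2),(6,3),\dots$ The relation $a_1-a_2\equiv t(b_1-b_2)\pmod p$ with both differences of absolute value less than $\lambda$ yields no contradiction, since $t(b_1-b_2)$ reduced modulo $p$ can perfectly well land in $(-\lambda,\lambda)$. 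What the hypothesis $\lambda^2<p$ actually gives (cross-multiplying: $a_1b_2\equiv a_2b_1\pmod p$ with both products below $p$) is the integer equation $a_1b_2=a_2b_1$, i.e.\ equality of the rationals $a_1/b_1=a_2/b_2$, not equality of the pairs. Consequently $|AA^{-1}|$ is strictly smaller than $\lambda^2$ (it equals the number of distinct fractions, which is $\tfrac{6}{\pi^2}\lambda^2+O(\lambda\log\lambda)$), and your opening reduction ``it suffices to show $\max_{t\neq 0}r(t)\ll 1+\lambda^2/p$'' fails for the same reason, because $r(1)=\lambda$; in the lattice formulation you mention, this is exactly the $\lambda/\ell_1$ term that you dropped.

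The gap is repairable, and the repair is precisely the step the paper supplies and you omitted: after reducing the mod-$p$ problem to counting distinct rationals $a/b$ with $1\le a,b\le\lambda$ (which you do correctly), restrict to coprime pairs, for which equality of fractions does force equality of pairs, and count them via $\varphi(1)+2(\varphi(2)+\cdots+\varphi(\lambda))=\tfrac{6}{\pi^2}\lambda^2+O(\lambda\log\lambda)\gg\lambda^2$. Your second case ($\lambda\ge p^{1/2}$: pass to $A_0=\{1,\dots,\lfloor\sqrt p\rfloor\}$ and use $|AA^{-1}|\ge|A_0A_0^{-1}|\gg p$) coincides with the paper's argument and works once the first case is fixed, modulo the small check that $|A_0|^2<p$ so the first case applies (true since $p$ is prime, hence not a perfect square).
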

Proof. For the set $A_\Z=\{1,2,\ldots,\lambda\}$ of integers we have
$$A_\Z  A_\Z^{-1}=\left\{ab^{-1} : a,b\in A_\Z, \gcd(a,b)=1\right\}$$
and thus
$$|A_\Z A_\Z^{-1}|=\varphi(1)+2(\varphi(2)+\varphi(3)+\ldots+\varphi(\lambda))=\frac{6}{\pi^2}\lambda^2+O(\lambda \log \lambda)$$
by \cite[Theorem 330]{hawr}, where $\varphi$ is Euler's totient function.
If $\lambda< p^{1/2}$ and $1\le a_1,b_1,a_2,b_2\le \lambda$, then the congruence
$a_1b_1^{-1}\equiv a_2b_2^{-1}\bmod p$ is equivalent to the integer equation
$a_1/b_1=a_2/b_2$. Hence, the number of different elements of $AA^{-1}$ is the same as of $A_\Z/A_\Z$.
If $\lambda\ge p^{1/2}$, $A$ contains the subset $A'=\{0,1,\ldots,\lfloor p^{1/2} \rfloor\}$ and thus 
$|AA^{-1}|\ge |A'A'^{-1}|\gg p$.\hfill $\Box$\\

\textit{Remark.} For $\lambda\ge p^{1/2}\log^{1+\varepsilon}p$ we have $|AA^{-1}|=(1+o(1))p$, see \cite{ga}. 
This result was later extended to all $\lambda$ with $p^{1/2}=o(\lambda)$, see \cite[Theorem 1.7]{gaka}.
Also in \cite{ga}, it is mentioned that $AA^{-1}=\mathbb F_p^*$ if and only if $\lambda \geq \frac{p+1}{2}$.\\

With Proposition~\ref{prop} in mind, $(\ref{bound1})$ implies that $\nu(\lambda) \gg p/\lambda^2$. An explicit construction of such a set $B$ was given in 
\cite[Section 6.2.2]{niwi}.

In fact, we can provide a simple construction of a set $B$ which is almost as large as possible with the property that $|AB|=|A||B|$. Identify $\mathbb F_p$ with the set of integers $\{1,2,\dots,p \}$ in the obvious way and define
$$B:= \left \{ x \in \mathbb F_p : \lambda < x \leq \frac{p}{\lambda}, x \text{ is prime}\right \}.$$
This set has the property that $|AB|=|A||B|$. Indeed, suppose for a contradiction that we have a non-trivial solution to the equation
$$ab=a'b' ,\,\,\,\,\,\,\,(a,a',b,b') \in A \times A \times B \times B.$$
Since $A$ and $B$ are both contained in sufficiently small intervals, there are no wraparound issues, and so we must have a non-trivial solution to the equation
\begin{equation}
ab=a'b' ,\,\,\,\,\,\,\,(a,a',b,b') \in A_{\mathbb Z} \times A_{\mathbb Z} \times B_{\mathbb Z} \times B_{\mathbb Z},
\label{primes}
\end{equation}
where
$$A_{\mathbb Z}=\{1,2,\dots,\lambda \} \subseteq \mathbb Z,\,\,\,\,\,\, B_{\mathbb Z}=\{ x \in \mathbb Z : \lambda < x \leq \frac{p}{\lambda}, x \text{ is prime}\}.$$
However, unique prime factorisation of the integers implies that the only solutions to \eqref{primes} are trivial.

Furthermore, by the Prime Number Theorem,
$$|B| \gg \frac{p/ \lambda}{\log (p/\lambda)}- \frac{\lambda}{\log \lambda}.$$
In particular, if $ \lambda \le 0.9\sqrt{p}$, then we have $|B| \gg \frac{p}{\lambda \log p}$. We summarise this in the following statement:
\begin{theorem} \label{thm:interval}
Let $A=\{1,2 \dots, \lambda \} \subset \mathbb F_p^*$ with $ \lambda\le 0.9\sqrt{p}$. Then 
$$\nu(A) \gg \frac{p}{\lambda \log p}.$$
\end{theorem}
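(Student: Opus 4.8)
The plan is to exhibit an explicit packing set $B$ built from primes lying in a suitable range, exactly as sketched in the paragraphs preceding the theorem. First I would identify $\F_p$ with the integer interval $\{1,2,\dots,p\}$ and set
$$B := \left\{ x \in \F_p : \lambda < x \le \frac{p}{\lambda},\ x \text{ is prime}\right\},$$
with the companion integer set $B_\Z$ defined by the same inequalities inside $\Z$. Since $\lambda \le 0.9\sqrt p$, both $A$ and $B$ sit in intervals short enough that no reduction modulo $p$ can occur in a product $ab$ with $a\in A$, $b\in B$: indeed $ab \le \lambda \cdot (p/\lambda) = p$. Hence any nontrivial coincidence $ab = a'b'$ in $\F_p$ lifts to a nontrivial coincidence in $\Z$ between $A_\Z$ and $B_\Z$.

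The next step is to rule out such a coincidence over $\Z$. Suppose $ab = a'b'$ with $a,a'\in A_\Z$, $b,b'\in B_\Z$ and $(a,b)\neq(a',b')$. The prime $b$ divides $a'b'$, and since $b > \lambda \ge a'$ it cannot divide $a'$, so $b \mid b'$; as $b'$ is prime this forces $b = b'$, whence $a = a'$, contradicting nontriviality. Therefore $|AB| = |A||B|$ and $B$ is an $A$-packing set, so $\nu(A) \ge |B|$.

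It remains to bound $|B|$ from below, which is where the only real input is needed: the Prime Number Theorem (or even Chebyshev-type bounds suffice) gives
$$|B| = \pi\!\left(\frac{p}{\lambda}\right) - \pi(\lambda) \gg \frac{p/\lambda}{\log(p/\lambda)} - \frac{\lambda}{\log \lambda}.$$
Since $\lambda \le 0.9\sqrt p$, we have $p/\lambda \ge \lambda/0.81 \gg \lambda$, so the first term dominates the second (for $p$ large; small $p$ are handled trivially by adjusting the implied constant), and $\log(p/\lambda) \le \log p$, giving $|B| \gg \frac{p}{\lambda\log p}$, as claimed.

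The argument has no serious obstacle; the main thing to be careful about is the \emph{no-wraparound} verification — confirming that the interval lengths of $A$ and $B$ multiply to at most $p$ so that the lift to $\Z$ is valid — and the elementary check that the factorisation argument indeed kills all nontrivial solutions. The quantitative step is a routine application of the Prime Number Theorem once the condition $\lambda \le 0.9\sqrt p$ is used to separate the two counting terms.
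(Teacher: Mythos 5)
Your proposal is correct and follows essentially the same route as the paper: the same set $B$ of primes in $(\lambda, p/\lambda]$, the same no-wraparound lift to $\mathbb{Z}$ (your divisibility argument with the prime $b$ is just an explicit form of the paper's appeal to unique factorisation), and the same Prime Number Theorem count using $\lambda \le 0.9\sqrt{p}$ to make the term $\pi(p/\lambda)$ dominate $\pi(\lambda)$.
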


\begin{enumerate}
          \item Using explicit versions of the Prime Number Theorem, see \cite{rosc}, 
          $$c_1\frac{x}{\log x}\le \pi(x)\le c_2\frac{x}{\log x}\quad \mbox{if }x\ge x_0$$
          we can explicitly calculate the implied constant in Theorem~\ref{thm:interval}. 
          \item The same approach applies to any residue class ring $\Z_n$ with composite $n$.
          \item We may also take the larger packing set of {\em rough numbers} 
          $$B=\{x\in \F_p : \lambda < x\le \frac{p}{\lambda}, x \text{ is not divisible by a prime $\le \lambda$}\}.$$
          We have\footnote{We write $f(x)\sim g(x)$ if $\lim\limits_{x\rightarrow \infty} \frac{f(x)}{g(x)}=1$.} 
          $$|B| \sim \frac{p}{\lambda \log \lambda}\omega(u),$$
          where $\omega$ is {\em Buchstab's function} and $u=\frac{\log(p/\lambda)}{\log \lambda},$ see \cite{bu} or \cite[Paragraph IV.32]{samicr}.
          In particular, if $p^{1/3}\le \lambda\le p^{1/2}$, we have $1\le u\le 2$ and $\omega(u)=\frac{1}{u}=\frac{\log \lambda}{\log(p/\lambda)}$.
          However, for $u\rightarrow \infty$, the Buchstab function $\omega(u)$ converges to $e^{-\gamma}$, where $\gamma$ is the {\em Euler-Mascheroni constant}, see \cite{br}. 
          In particular, if\footnote{$f(x)=o(g(x))$ means $\lim\limits_{x\rightarrow\infty} \frac{f(x)}{g(x)}=0$.} $\lambda=e^{o(\log p)}$ is subexponential, we get $|B|\gg \frac{p}{\lambda \log \lambda}$ and so $\nu(\lambda)\gg \frac{p}{\lambda \log \lambda}$.
         \end{enumerate}

     \section{Symmetries} \label{sec:symm}

Let $A\subseteq G$ be a set and $B$ be an $A$-packing set.      
In this section we obtain a general result  about symmetries of our set of translations $B$ (this is in spirit of paper \cite{SSY}).  
Surprisingly, the set of symmetries of this extremal set $B$ does not grow  after taking the ratio $B\circ B^{-1}$.

Consider an arbitrary set  $T\subseteq G$.
Denote by $\Sym (T)$ the group of symmetries of $T$ that is
$$
\Sym (T) = \{ x \in G ~:~ x \circ T = T \} \,.
$$
Notice that $1\in \Sym (T)$, $\Sym(T) = \Sym^{-1} (T)$ and 
$\Sym(T) \subseteq T\circ T^{-1}$.

\begin{prop}
	Let $A\subseteq G$ be a set and let $B$ be an $A$-packing set of maximal size. 
	Then
	$$\Sym (B) = \Sym(B\circ B^{-1}) \,.$$
	Further  
	$$
	\left( \Sym (A\circ A^{-1})  \right ) \cap (A\circ A^{-1}) = (\Sym (A\circ A^{-1}) \setminus \Sym (B) ) \bigsqcup \{1\} \,. 
        \footnote{We denote by $A\sqcup B$ the union of two disjoint sets.}
	$$
\end{prop}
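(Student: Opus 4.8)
The plan is to work with the characterization that $B$ is an $A$-packing set of maximal size if and only if $(A\circ A^{-1})\cap(B\circ B^{-1}) = \{1\}$ together with the maximality condition $G = A^{-1}\circ A\circ B$ established in the proof of \eqref{bound1}. First I would prove the two nontrivial inclusions behind $\Sym(B) = \Sym(B\circ B^{-1})$. The inclusion $\Sym(B) \subseteq \Sym(B\circ B^{-1})$ is immediate and holds for any set: if $x\circ B = B$ then $x\circ(B\circ B^{-1}) = (x\circ B)\circ B^{-1} = B\circ B^{-1}$ (using commutativity of $G$). For the reverse inclusion, suppose $x\circ(B\circ B^{-1}) = B\circ B^{-1}$. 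Since $1\in B\circ B^{-1}$, we get $x \in B\circ B^{-1}$, so $x = b_1\circ b_2^{-1}$ for some $b_1,b_2\in B$. The goal is to upgrade this to $x\circ B = B$; since $|x\circ B| = |B| = \nu(A)$, it suffices to show $x\circ B$ is again an $A$-packing set, because then maximality forces nothing directly — instead I would argue that $B \cup (x\circ B)$, or rather the key point, is that $x\circ B \subseteq B$. Here is the cleaner route: for any $b\in B$, consider $x\circ b$. Because $x\circ(B\circ B^{-1}) = B\circ B^{-1}$, for each $b'\in B$ we have $x\circ b\circ b'^{-1} \in B\circ B^{-1}$, i.e. $x\circ b \circ b'^{-1} = b_1'\circ b_2'^{-1}$ with $b_1',b_2'\in B$. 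I want to deduce $x\circ b \in B$. Consider the set $B' = (x\circ B)\cup B$; I would show $(A\circ A^{-1})\cap(B'\circ B'^{-1}) = \{1\}$, which by maximality of $|B| = \nu(A)$ forces $B' = B$, hence $x\circ B\subseteq B$, hence (by cardinality) $x\circ B = B$. The content of $(A\circ A^{-1})\cap(B'\circ B'^{-1}) = \{1\}$: $B'\circ B'^{-1} = (B\circ B^{-1}) \cup (x\circ B\circ B^{-1}) \cup (x^{-1}\circ B\circ B^{-1}) \cup (x\circ B\circ (x\circ B)^{-1})$; the first equals $B\circ B^{-1}$, and since $x, x^{-1}\in \Sym(B\circ B^{-1})$ each of the next three also equals $B\circ B^{-1}$. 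So $B'\circ B'^{-1} = B\circ B^{-1}$, and the intersection condition is inherited. This is the main idea and also the step requiring the most care.

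For the second identity, I would start from $\Sym(A\circ A^{-1})\subseteq A\circ A^{-1}$ (noted in the excerpt, valid since $1 \in A\circ A^{-1}$), so the left-hand side $\big(\Sym(A\circ A^{-1})\big)\cap(A\circ A^{-1})$ is just $\Sym(A\circ A^{-1})$. Thus I must show $\Sym(A\circ A^{-1}) = \big(\Sym(A\circ A^{-1})\setminus \Sym(B)\big)\sqcup\{1\}$, which is equivalent to the claim that $\Sym(A\circ A^{-1})\cap\Sym(B) = \{1\}$. Clearly $1$ lies in both. Conversely, if $x\in\Sym(A\circ A^{-1})\cap\Sym(B)$, then $x\in A\circ A^{-1}$ (as above) and, by the first part, $x\in\Sym(B)\subseteq B\circ B^{-1}$. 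Hence $x\in(A\circ A^{-1})\cap(B\circ B^{-1}) = \{1\}$ by the packing characterization. This forces $x = 1$, which gives the disjoint-union identity.

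The step I expect to be the main obstacle is establishing $\Sym(B\circ B^{-1})\subseteq\Sym(B)$, i.e. that the symmetry group does not grow under the ratio operation; the argument above via enlarging $B$ to $B' = (x\circ B)\cup B$ and checking that the packing property is preserved (so maximality collapses $B'$ back to $B$) is the crux, and one must be careful that $B'\circ B'^{-1}$ really equals $B\circ B^{-1}$ using $x^{\pm 1}\in\Sym(B\circ B^{-1})$ and the abelian structure of $G$. Everything else is routine unwinding of definitions and the basic facts $1\in T\circ T^{-1}$ and $\Sym(T)\subseteq T\circ T^{-1}$.
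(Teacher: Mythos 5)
Your proof is correct, but it takes a genuinely different route from the paper's on both halves. For $\Sym(B)=\Sym(B\circ B^{-1})$, the paper argues pointwise: maximality of $B$ gives the covering property $A\circ A^{-1}\circ B\supseteq G$ (as in the proof of \eqref{bound1}), so a hypothetical $b$ with $b\circ x\notin B$ can be written as $b'\circ a_1\circ a_2^{-1}$, and then $x\in\Sym(B\circ B^{-1})$ together with $|A\circ B|=|A||B|$ forces $a_1=a_2$ and $b\circ x=b'\in B$, a contradiction. You instead work entirely with the characterization that the packing property is equivalent to $(A\circ A^{-1})\cap(B\circ B^{-1})=\{1\}$ and run an extremal argument: $B'=B\cup(x\circ B)$ satisfies $B'\circ B'^{-1}=B\circ B^{-1}$ because $x^{\pm 1}\in\Sym(B\circ B^{-1})$, hence $B'$ is again an $A$-packing set and maximality collapses it back to $B$, giving $x\circ B=B$. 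This avoids the covering step entirely and in fact uses only that $B$ is not properly contained in another packing set (inclusion-maximality), a formally weaker hypothesis than maximal cardinality. For the second identity, the paper reruns the covering argument to obtain $\Sym(A\circ A^{-1})\setminus\Sym(B)\subseteq A\circ A^{-1}$ and combines it with $\Sym(B)\subseteq B\circ B^{-1}$ and $(B\circ B^{-1})\cap(A\circ A^{-1})=\{1\}$; you shortcut this by noting that $\Sym(T)\subseteq T$ whenever $1\in T$, so the left-hand side is simply $\Sym(A\circ A^{-1})$ and the identity reduces to $\Sym(A\circ A^{-1})\cap\Sym(B)=\{1\}$, which again follows from the packing condition. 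Your route is the more economical one; the paper's route makes explicit how the covering consequence of maximality enters (the mechanism it reuses from Section~\ref{sec2}), and its intermediate inclusion is obtained there by argument rather than by the trivial observation you exploit. Both proofs are complete and correct.
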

\begin{proof}
	The inclusion $\Sym (B) \subseteq \Sym(B\circ B^{-1})$ is trivial.
	Suppose that there is an element $x\in \Sym(B\circ B^{-1})$ but $x\notin \Sym (B)$.
	It follows that there is $b\in B$ such that $b\circ x \notin B$. 
	As in the proof of $(\ref{bound1})$ in Section~\ref{sec2} we get $B \circ A\circ A^{-1} \supseteq G$ and see that 
	$b \circ x = b' \circ  a_1 \circ a^{-1}_2$ for some $a_1,a_2\in A$ and $b'\in B$. 
	Hence because  $x\in \Sym(B\circ B^{-1})$, we get
	$$
	\tilde{b} \circ (\tilde{b}')^{-1} = b\circ x \circ (b')^{-1} = a_1 \circ a^{-1}_2
	$$
	for some $\tilde{b}, \tilde{b}'\in B$. 
	But $|A\circ B| = |A||B|$ and thus $a_1=a_2$, $\tilde{b} = \tilde{b}'$.
	It gives us $b\circ x = b' \in B$ and this is a contradiction.

	Taking $x\in \Sym (A\circ A^{-1}) \setminus \Sym (B)$ and repeating the previous arguments, we obtain 
	$$
	b \circ (b')^{-1} = a_1 \circ (x \circ a_2)^{-1} = \tilde{a}_1 \circ (\tilde{a}_2)^{-1} 
	$$
	and hence $b=b'$, $\tilde{a}_1 = \tilde{a}_2$. 
	Thus $x=a_1 \circ a^{-1}_2 \in A \circ A^{-1}$ and we get
	$$
	\Sym (A\circ A^{-1}) \setminus \Sym (B) \subseteq A \circ A^{-1} \,.
	$$
	But $\Sym (B) \subseteq B\circ B^{-1}$ and $(B\circ B^{-1}) \cap (A\circ A^{-1}) = \{ 1\}$ thus 
	$\Sym(B) \cap (A\circ A^{-1}) = \{ 1\}$. 
	This completes the proof.  
\end{proof}

\bigskip

If $B$ is any $A$-packing set of maximal size, then
the appearance of the set $\Sym (B)$ in our problem of computing $\nu (A)$ is natural 
in view 
of a trivial equality $\nu (A \circ \Sym(B)) = \nu (A) = |B|$.

         \section{Covering sets} \label{sec:cov}

Given $A \subseteq G$, we say that $B \subseteq G$ is an {\em $A$-covering set} if $A \circ B =G$. The {\em covering number} of $A$, denoted $cov(A)$, is the size of the smallest $A$-covering set. There is a natural connection between covering and packing problems, and likewise with the problems of determining the values of $cov(A)$ and $\nu(A)$. In particular, it follows from Ruzsa's Covering Lemma that 
$$cov(A \circ A^{-1}) \leq \nu(A).$$

The problem of determining $cov(A)$ in the case $G=\mathbb F_p^*$ was studied in \cite{chshwi,klsh1,klsh2}, where $A=\{1,2,\ldots,\lambda\}$. A more general study of the problem can be found in \cite{BJR}; see Section 3 therein for background on this problem in the finite setting. In particular, it is proved in \cite[Corollary 3.2]{BJR} that for any finite group $G$ and $A \subset G$
\begin{equation}
\frac{|G|}{|A|} \leq cov(A) \leq \frac{|G|}{|A|}(\log|A| +1). 
\label{cov}
\end{equation}
By contrast with \eqref{cov}, we showed in Section 2 of this paper that $\nu(A)$ can essentially take any value in between $|G|/|A|^2$ and $|G|/|A|$. It is interesting to note that the size of $cov(A)$ is much more restricted than that of $\nu(A)$. 

In the special case $G=\F_p^*$ and $A=\{1,2,\ldots,\lambda\}$ we have the improvement $cov(A)< 2p/\lambda$ by \cite[Theorem 2]{chshwi}.  However, an interesting observation is that if we instead take $A$ to be the middle third interval then the log factor is needed and $cov(A) \approx \log |A|$.
In particular, this gives us a constructive example (as opposed to random choice, see, say, \cite{BJR}) of a set such that  upper bound in $(\ref{cov})$ is sharp.

         \begin{prop}
	 For a prime $p>3$ put
$$
	A= \{ x\in \F_p^* ~:~ x\in [p/3, 2p/3] \} \,.
$$
	Then we have 
    $$\frac{\log(p-1)}{\log(3)} \le cov(A) < 3(\log(p)+1).$$ 
\end{prop}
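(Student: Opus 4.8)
The plan is to prove the two bounds separately. For the upper bound $cov(A) < 3(\log p + 1)$, I would simply invoke \eqref{cov} with $G = \F_p^*$, so $|G| = p-1$, and with $|A| \geq (p-1)/3 - 1$; actually the cleanest route is to note $|A| \geq p/3$ (after a short check for small $p$, using $p > 3$), whence $|G|/|A| \leq (p-1)/(p/3) < 3$ and $\log|A| + 1 \leq \log p + 1$, giving $cov(A) \leq (|G|/|A|)(\log|A|+1) < 3(\log p + 1)$. So the upper bound is essentially immediate from the general theorem quoted from \cite{BJR}; the only care needed is bookkeeping on the $-1$'s coming from the fact that $0 \notin \F_p^*$, and confirming $|A|$ is close enough to $p/3$.

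The substance is the lower bound $cov(A) \geq \frac{\log(p-1)}{\log 3}$. Here the key observation is a multiplicative analogue of the fact that the middle-third interval only grows by a bounded factor under sumsets: if $A \subseteq [p/3, 2p/3]$ then for any single element $b$, the dilate $bA$ is again (as a subset of $\F_p^*$) contained in a union of few intervals — more precisely, the ratio of the largest to smallest element of $A$ is at most $2$, so $A$ (and hence $bA$) is ``$2$-multiplicatively-bounded''. I would make this quantitative by working with the quantity $r(S) := \max(S)/\min(S)$ computed via a fixed lift to positive integers, noting $r(A) \leq 2 < 3$. The claim to establish is that for any $B$ with $|B| = k$, the product set $AB$ is contained in a union of at most $3^{\,?}$ ... — more carefully, I would argue by iterating: if $B = \{b_1, \ldots, b_k\}$, then $AB = \bigcup_i b_i A$, and I want to show $|AB| \leq |A| \cdot 3^{k-1}$ or some such, so that $AB = \F_p^*$ forces $(p-1) \leq |A| \cdot 3^{k-1} \leq (p/3)\cdot 3^{k-1} \cdot(\text{const})$...

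Let me restructure that: the cleanest formulation is that $A$, being contained in a ``multiplicative interval'' $[m, 3m)$ for suitable $m$ (since $\max/\min \le 2 < 3$), has the property that $A \cdot A \subseteq [m^2, 9m^2)$, a multiplicative interval of multiplicative length $9 = 3^2$; inductively the $t$-fold product $A^{(t)}$ lies in a multiplicative interval of length $3^t$ — except these are multiplicative intervals \emph{in the integers}, and I must control wraparound mod $p$. The honest approach: take $B$ an $A$-covering set of size $k = cov(A)$. Then $\F_p^* = AB = \bigcup_{b \in B} bA$, so $p - 1 \leq \sum_{b\in B} |bA| = k|A|$; but that only recovers the trivial $cov(A) \geq (p-1)/|A| \approx 3$, which is far weaker than $\log(p-1)/\log 3$. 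So the iteration must genuinely use a product structure. The right idea is: if $B$ covers, then in particular $1 = a b$ for some $a \in A, b \in B$, i.e. $B \ni a^{-1}$ for some $a \in A$, and more usefully, consider that $B$ must meet every coset-like translate; I think the intended argument is that $A^{-1}A^{-1}\cdots A^{-1}$ ($k$ times) must be large because $AB = \F_p^*$ implies $B \supseteq$ (something), OR: since $A\cdot B = \F_p^*$, we have $\F_p^* = A B \subseteq A \cdot (A^{-1} \F_p^*)$... this is circular.

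The hard part — and where I would concentrate effort — is finding the correct pigeonhole/iteration that extracts the logarithmic lower bound. I believe it goes: if $|B| = k$ then $B \cdot A^{-1} \supseteq \F_p^*$ rearranges to show every element of $\F_p^*$ is $b/a$ with $b \in B$, $a \in A$; now iterate — every element of $\F_p^*$ is a ratio of a product of $k$ elements of $B$ over... no. Instead: the multiplicative structure of $A$ means $A \cdot A \cdots A$ ($j$ times) has size at most $3^j \cdot$ something while still avoiding full $\F_p^*$ until $3^j \gtrsim p$; combined with the covering condition via Ruzsa-type or submultiplicativity of product sets, $k \gtrsim \log_3 p$. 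Concretely I would try: show $A$ is contained in a multiplicative subgroup-coset-free ``geometric interval'', so $|\underbrace{A\cdots A}_{j}| < 3^j$ as a \emph{subset of $\mathbb{Z}$} provided $3^j < p$ (no wraparound), hence $AB = \F_p^*$ with $|B|=k$ forces, by writing $\F_p^* \subseteq (AB)(AB)^{-1}\cdots$ ... — I would ultimately pin it down as: $cov(A) = k \Rightarrow p-1 = |AB| $ and since $A \subseteq [p/3,2p/3]$, translating to additive logarithms $\log A \subseteq [\log(p/3), \log(2p/3)]$ a genuine interval of length $\log 2$, the set $\log A + \log B$ must cover $\log \F_p^* $ of ``length'' $\log(p-1)$ — but $\log A$ has length $<\log 3$, so $\log B$ needs $\geq \log(p-1)/\log 3$ translates worth of coverage, i.e. $|B| \geq \log(p-1)/\log 3$. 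That last sentence, made rigorous (intervals of real logarithms don't literally behave additively mod $p$, so one argues: $A$ lies in $\{x : \lfloor \log_3 x \rfloor = c\}$ for at most ... ), is the main obstacle; I expect the paper handles it by the slick observation that $A \cdot A \cdot \cdots$ stays in one ``digit class base $3$'' until it can't, forcing the count.
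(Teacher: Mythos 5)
Your upper bound is fine and is exactly what the paper does: apply the general bound \eqref{cov} with $|G|=p-1$ and $|A|\geq (p-1)/3$. But the lower bound --- the substantive half of the statement --- is never actually proved in your proposal, and you say so yourself ("that last sentence, made rigorous \dots is the main obstacle"). The heuristic you lean on, namely that $A$ sits in a multiplicative interval of ratio $\leq 2<3$ so that $\log A$ has length $<\log 3$ and hence $\log B$ must supply $\geq \log(p-1)/\log 3$ translates, breaks down at precisely the point you flag: multiplication by $b$ is carried out modulo $p$, and $bA \bmod p$ is an arithmetic progression with common difference $b$ spread over all of $\F_p$, not a set confined to a short multiplicative (or additive) range. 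So no statement of the form ``$AB$ lies in a union of $|B|$ short geometric intervals'' is available, your intermediate claim that the $j$-fold product of $A$ has size $<3^j$ in $\Z$ cannot be right as written ($|A|\approx p/3$ already exceeds $3^j$ for small $j$), and the iteration you gesture at never connects to the covering hypothesis $AB=\F_p^*$. As it stands the lower bound rests on an unproved analogy, so there is a genuine gap.

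The paper's mechanism is different and worth knowing: it is a simultaneous Diophantine approximation (pigeonhole) argument rather than a product-set or length-counting one. Given any $B=\{\la_1,\dots,\la_k\}$ with $k<\log(p-1)/\log 3$, the simultaneous Dirichlet Approximation Theorem applied to the $k$ rationals ${\rm inv}(\la_i)/p$ produces an integer $1\le n<p$ with $\Abs{{\rm inv}(\la_i)n/p-a_i}\le (p-1)^{-1/k}<1/3$ for suitable integers $a_i$ and all $i$; this means each $n\,{\rm inv}(\la_i)\bmod p$ lies outside the middle third $[p/3,2p/3]$, i.e.\ $n\notin \la_i A$ for every $i$, so $AB\neq\F_p^*$. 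Thus the role of the exponent $1/3$ is that the complement of $A$ is an additive neighbourhood of $0$ in $\F_p$ of relative measure $2/3$, into which one can simultaneously steer the $k$ dilates of $n$ --- not that $A$ has bounded multiplicative ratio. If you want to salvage your write-up, replace the entire "multiplicative length" discussion by this Dirichlet argument; the upper-bound paragraph can stay.
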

\begin{proof}
	Put $T= \{ x\in \F_p^* ~:~  x \not\in[p/3,2p/3] \}$. For $\lambda \in \{1,\ldots,p-1\}$ let inv$(\lambda)\in \{1,\ldots,p-1\}$ be the unique integer
    with ${\rm inv}(\lambda)\lambda\equiv 1\bmod p$.
	By the simultaneous version of the Dirichlet Approximation Theorem, see \cite{sc}, for any integer $1\le k<\log (p-1)/\log(3)$ and $\la_1,\dots, \la_k \in \{1,\ldots,p-1\}$  there is an integer $1\le n<p$ 
    and integers $a_1,\ldots,a_k$ such that 
    $$|{\rm inv}(\lambda_i)n/p-a_i|\le 1/(p-1)^{1/k}< 1/3$$ 
    for $i=1,\ldots,k$.
  	In other words, for any  $\la_1,\dots, \la_k\in \F_p^*$ with $1\le k< (p-1)/\log(3)$ there is 
	$n\in \la_1 T \cap \dots \cap \la_k T$.
	Putting $B = \{ \la_1,\dots, \la_k  \}$, we see  
	that $n \notin  AB$ and hence  $ AB \neq \F^*_p$ for any $B$ with $1\le |B| <\log(p-1)/\log(3)$. 
	By the definition this means that  $cov (A) \ge\log (p-1)/\log(3)$. The upper bound follows from $(\ref{cov})$. 
\end{proof}

            

\section*{Acknowledgements}

The first and third authors are supported by the Austrian Science Fund FWF Projects F5509 and F5511-N26, respectively, 
which are  part of the Special Research Program ``Quasi-Monte Carlo Methods: Theory and Applications". 
We are grateful to Antal Balog, Brandon Hanson, George Shakan and Igor Shparlinski for helpful conversations and insights.

\end{document}